\def\hide#1{}
\def\old#1{}
\def\oop#1{}
\def\gap#1{}
\theoremstyle{plain}
\newtheorem{theorem}{Theorem}
\newtheorem{proposition}{Proposition}
\newtheorem*{theorem*}{Theorem}
\newtheorem{lemma}{Lemma}
\newtheorem{remark}{Remark}
\newtheorem{corollary}{Corollary}
\newtheorem{definition}{Definition}
\def\address#1#2{\begingroup
\noindent\parbox[t]{7.8cm}{%
\small{\scshape\ignorespaces#1}\par\vskip1ex
\noindent\small{\itshape E-mail address}%
\/: #2\par\vskip4ex}\hfill%
\endgroup}%
\title{A characterization of monotypically supersymmetric
polynomials}
\author{
Grigory~Chelnokov
{\small\em \; National Research University Higher School of Economics, Moscow, Russia} \\
 Maxim~Turevskii {\small\em \; SPBSU,facultee of mathematics and computer science}}
\date{}
\begin{document}

\newcounter{figcounter}
\setcounter{figcounter}{0} \addtocounter{figcounter}{1}

\maketitle 
\begin{abstract}
We introduce an object that has obvious similarity to the classical
one - the algebra of supersymmetric polynomials. Despite the
similarity, the known structure theorems on supersymmetric
polynomials do not help in the study of the new object, so we prove
their counterpart for the new object.

{\bf 2020 Mathematics Subject Classification:} 17B99, 13A99
\end{abstract}


\section*{Introduction}
The study of supersymmetric polynomials (as well as almost all
objects with the prefix super-) is motivated by needs of theoretical
physics. A polynomial $p$ in $F[x_1,\dots,x_m,y_1,\dots,y_n]$ is
called {\em supersymmetric} if the following three conditions hold:
\begin{itemize}
\item[(1)] $p$ is invariant under permutations of $x_1,\dots,x_m$;
\item[(2)] $p$ is invariant under permutations of $y_1,\dots,y_n$;
\item[(3)] when the substitution $x_1=t,\;y_1=-t$ is made in $p$,
the  resulting polynomial is independent of $t$.
\end{itemize}
Paper \cite{bi-supsym2} provides explicit linear basis of algebra
$T_{(m,n)}$ of supersymmetric polynomials, \cite{bi-supsym} provides
some relations of supersymmetric and super Schur polynomials thus
connecting supersymmetric polynomials with representations of Lie
superalgebra $sl(m/n)$.

In this context, it is natural to wonder, what object one gets by
replacing in the condition (3) the set of pairs $K_{m,n}$ with
another highly symmetric graph. The first coming  to mind candidate
is, certainly, the complete graph $K_n$. So, the

\begin{definition}\label{T(n)} A polynomial $p$ in $F[x_1,\dots,x_n]$ is called
{\em monotypically supersymmetric} if the following two conditions
hold:
\begin{itemize}
\item[(1)] $p$ is invariant under permutations of $x_1,\dots,x_n$;
\item[(2)] when the substitution $x_1=t,\;x_2=-t$ is made in $p$,
the  resulting polynomial is independent of $t$.
\end{itemize}
By $T_n$ denote the algebra of monotypically supersymmetric
polynomials in $n$ variables. By $T_n^d$ and $T_n^{d-}$ denote
subspaces in $T_n$ of degree $d$ and at most $d$ respectively.
\end{definition}

Although monotypical supersymmetry does not have immediately obvious
applications in theoretical physics, it is still an object worthy of
research.

 Consider power sum polynomials $p_k=x_1^k+\cdots+x_n^k$.
Obviously, any power sum with an odd index is monotypically
supersymmetric $p_{2k+1}\in T_n$ (for integer non-negative $k$). In
this paper we prove that polynomials $p_{2k+1}$ generate $T_n$ as an
algebra and provide explicit canonical form for elements of $T_n$.


\subsection*{Initial notations}

Here we present only the minimum set of notations necessary to
smoothly formulate the main result. The bulk of the notation and
language conventions are two sections below.

$F$ always mean some field with $Char(F)=0$.

For $T_n, T_n^d, T_n^{d-}$ see Definition \ref{T(n)} above.

Similarly, $S_n, S_n^d, S_n^{d-}$ denote subalgebra of symmetric
polynomials, symmetric polynomials of degree $d$ and symmetric
polynomials of degree at most $d$ respectively.

Denote $p_k=x_1^k+\cdots+x_n^k$. Also, one special polynomial will
be important further: denote $\delta=\prod_{k,\neq\ell \in
[1..n]}(x_k+x_{\ell})$.


For a positive integer $n$ by $i(n)$ and $j(n)$ denote the maximal
integer numbers, such that $2i(n)\leqslant n$ and $2j(n)+1\leqslant
n$. We omit an argument when it is obvious from the context which
$n$ is meant.

Call an odd power sum $p_{2m+1}$ {\em elder} if $m>j$. Call a
product of odd power sums {\em long} if it consists of more then $i$
elder power sums. In other words, the product
$$
p_{2m_1+1}\cdots p_{2m_{i+1}+1},
$$
where $m_1,\dots,m_{i+1}>j$, is long.

Call a product of odd power sums {\em proper} if it does not
contains a long subproduct. In other words, the general form of
proper product is
$$
p_{1}^{c_1}p_{3}^{c_3}\cdots p_{2j+1}^{c_{2j+1}}p_{2m_1+1}\cdots
p_{2m_{i'}+1}
$$
where $i'\leqslant i$ and $m_k>j$ for $k\in [1..i']$.



\section*{Main result}
The main goal of this paper is to prove the following theorem.
%

\begin{theorem}\label{main}
The algebra of monotypically supersymmetric polynomials $T_n$ is
generated by odd power sums $p_{2k+1}$ for integer non-negative $k$.
Moreover proper products form a linear basis of $T_n$.
\end{theorem}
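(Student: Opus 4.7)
The plan is to prove the spanning claim and the linear-independence claim separately, with spanning reduced by induction on $n$ via a structural map $T_n\to T_{n-2}$.

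For spanning, introduce the evaluation map $\psi:S_n\to S_{n-2}$, $\psi(f)=f(0,0,x_3,\dots,x_n)$, which sends $p_k$ to $q_k:=x_3^k+\cdots+x_n^k$. Two observations drive the argument. First, $\psi(T_n)\subseteq T_{n-2}$: for $p\in T_n$, applying the $T_n$-condition first to the pair $(x_3,x_4)$ (after using $S_n$-symmetry to move it to the first two positions) and then to $(x_1,x_2)$ yields $\psi(p)(t',-t',x_5,\dots,x_n)=p(0,0,t',-t',x_5,\dots,x_n)=p(0,0,0,0,x_5,\dots,x_n)$, independent of $t'$. Second, $\ker\psi\cap T_n=\delta\cdot S_n$: if $\psi(p)=0$ and $p\in T_n$, then $p(t,-t,x_3,\dots,x_n)=0$ identically in $t$, so $(x_1+x_2)\mid p$, and $S_n$-symmetry propagates this divisibility to every pair, forcing $\delta\mid p$.

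These yield the short exact sequence $0\to\delta S_n\to T_n\xrightarrow{\psi} T_{n-2}\to 0$ with the generators $q_{2k+1}$ lifted by $p_{2k+1}$. Inducting on $n$, $T_{n-2}$ is generated by odd power sums, so every $p\in T_n$ can be written as $g(p_1,p_3,p_5,\dots)+\delta h$ for some polynomial $g$ and some $h\in S_n$. The key lemma to finish is $\delta\cdot S_n\subseteq F[p_1,p_3,p_5,\dots]$; equivalently, $\delta\cdot p_2^{\beta_2}p_4^{\beta_4}\cdots\in F[p_1,p_3,p_5,\dots]$ for every monomial in even power sums. The base case $|\beta|=0$, that $\delta$ itself lies in $F[p_1,p_3,p_5,\dots]$, I would establish through the identification $\delta=V(x^2)/V(x)=s_\rho$ for the staircase $\rho=(n-1,n-2,\dots,1,0)$, combined with the classical vanishing $\chi^\rho_\mu=0$ whenever $\mu$ has an even part, so that the power-sum expansion $s_\rho=\sum_\mu z_\mu^{-1}\chi^\rho_\mu\, p_\mu$ involves only partitions $\mu$ with odd parts. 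The inductive step on $|\beta|$ uses Newton's identities to rewrite $\delta\cdot p_{2k}\cdot(\text{lower-degree factors})$ in terms of already-handled cases. Simultaneously, the identity rewriting $\delta$ in odd power sums provides the reduction that turns any long product of elder power sums into a combination of proper products.

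For linear independence I would match Hilbert series. The short exact sequence gives the recursion
$$
H_{T_n}(z)=z^{\deg\delta}H_{S_n}(z)+H_{T_{n-2}}(z),\qquad H_{S_n}(z)=\prod_{k=1}^n(1-z^k)^{-1},
$$
while a direct enumeration of proper products by degree yields
$$
H_{\mathrm{PP}}(z)=\prod_{k=0}^{j}\frac{1}{1-z^{2k+1}}\cdot\sum_{i'=0}^{i}[u^{i'}]\prod_{m>j}\frac{1}{1-uz^{2m+1}}.
$$
Verifying $H_{T_n}(z)=H_{\mathrm{PP}}(z)$, combined with the spanning result, forces the proper products to be linearly independent. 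The main obstacle is the key lemma $\delta\cdot p_2^{\beta_2}p_4^{\beta_4}\cdots\in F[p_1,p_3,p_5,\dots]$: the pattern is evident in small cases (\emph{e.g.}\ $\delta=p_1$ for $n=2$, $\delta=(p_1^3-p_3)/3$ for $n=3$, and $c\cdot\delta=9p_1p_5-5p_3^2-5p_1^3p_3+p_1^6$ for $n=4$), but the induction on $|\beta|$ requires careful control over the number of elder factors in each intermediate expression so that the reduction stays inside the span of proper products rather than generating new long products.
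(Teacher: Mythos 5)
Your skeleton is essentially the paper's own: your $\psi$ is its $\pi$ (set two variables to zero), your lift $g(p_1,p_3,\dots)$ is its $PreIm$, and your identification $\ker\psi\cap T_n=\delta S_n$ is its Lemma~\ref{div_delta} together with Lemma~\ref{obvious2}. The problem is that everything you correctly set up only reduces the theorem to the two statements you then leave unproved, and those are precisely where the actual content lies. Your ``key lemma'' $\delta\,p_2^{\beta_2}p_4^{\beta_4}\cdots\in F[p_1,p_3,\dots]$ is not a lemma on the way to the theorem --- it \emph{is} the theorem (it is equivalent to the spanning claim, since $T_n=F[p_{\mathrm{odd}}]+\delta S_n$ by your own exact sequence), and the proposed inductive step ``via Newton's identities'' does not work as stated: Newton's identities re-express $p_{2k}$ through elementary symmetric functions or through lower power sums together with even ones, so they give no mechanism for absorbing the even power sums into $\delta$. (Your base case is fine: all hook lengths of the staircase are odd, so $\chi^{\delta_n}_\mu=0$ when $\mu$ has an even part and $\delta=s_{\delta_n}\in F[p_{\mathrm{odd}}]$; but that is the easy part.) Moreover, even granting $\delta S_n\subseteq F[p_{\mathrm{odd}}]$, the theorem asks for the span of \emph{proper} products, and your one-sentence claim that ``the identity rewriting $\delta$'' reduces long products to proper ones is unsubstantiated --- note that the minimal long products have degree $(i+1)(2j+3)>\deg\delta$, so the single relation for $\delta$ cannot by itself supply all the needed rewriting rules.

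The paper closes exactly this gap not by rewriting but by linear algebra: it splits $\delta S_n^{d-\binom{n}{2}}=\delta L_{\mathrm{odd}}\oplus\delta L_{\mathrm{even}}$, disposes of $\delta L_{\mathrm{odd}}$ by the inner induction on degree, and then shows that the degree-$d$ products $p_{2m_1+1}\cdots p_{2m_i+1}$ of exactly $i$ factors with $m_\ell\geqslant j$, minus their canonical preimages, are linearly independent modulo $\delta L_{\mathrm{odd}}$ (this rests on Lemma~\ref{lin_independence}, whose proof via the specialization $x_{i+\ell}\mapsto -x_\ell$ and the identification $r_{(n,2k+2j+1)}\simeq\frac{2k+2j+1}{2j+1}h_{i,k}(x_1^2,\dots,x_i^2)$ is the technical heart of the paper and has no counterpart in your proposal), and that their number equals $\dim\delta L_{\mathrm{even}}^{d-\binom{n}{2}}$ by a Young-diagram count. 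Your Hilbert-series plan for linear independence is in principle viable, but the identity $H_{T_n}=H_{\mathrm{PP}}$ is asserted, not verified, and verifying it amounts to the same partition identity the paper proves at the end --- while its left-hand side is only known once the spanning/exactness argument is complete. As it stands the proposal is a correct reduction plus an honest admission that the reduction's target is open; it is not a proof.
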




The remainder of the paper is devoted to the proof of Theorem
\ref{main}.


\subsection*{Notations, language conventions and main constructions}


When a polynomial is denoted by one of letters $f$, $g$, $\alpha$ or
$\beta$, this notation is used locally, in the next paragraph any of
this letters may denote another object. Contrary, if some polynomial
is denoted by any letter other then $f, g, \alpha, \beta$ we
emphasize that this polynomial is also used in some other argument
in this article.


Few times in this paper we consider $F[x_1,\dots,x_n]$ as tower of
extensions $F[x_2,x_4,\dots,x_{2i}][x_1,x_3,\dots,x_{2j+1}]$. In
this case we speak about {\em a coefficient} at
$x_1^{c_1}x_3^{c_3}\dots x_{2j+1}^{c_{2j+1}}$  in $f\in
F[x_2,x_4,\dots,x_{2i}][x_1,x_3,\dots,x_{2j+1}]$ (for a given
$(j+1)$-plet of powers $(c_1,\dots,c_{2j+1})$), meaning a
corresponding polynomial in $F[x_2,\dots,x_{2i}]$. Most likely, this
is used in conjunction with the following construction.

Recall that symmetric polynomials have unique representation via
power sums. For $f\in S_n$ by $Repr(f)$ denote a polynomial in
$F[x_1,\dots,x_n]$, such that $f=Repr(f)(p_1,\dots,p_n)$. We always
consider $Repr(f)$ as an element of
$F[x_2,x_4,\dots,x_{2i}][x_1,x_3,\dots,x_{2j+1}]$. Obviously, $f\to
Repr(f)$ is an isomorphism $S_n\to F[x_1,\dots,x_n]$.

Let $f\in S_n$ and $c_1,\dots,c_{2j+1}$ be a symmetric polynomals
and some $(2j+1)$-plet of non-negative integers. Let $g$ be the
coefficient at $x_1^{c_1}x_3^{c_3}\dots x_{2j+1}^{c_{2j+1}}$ in
$Repr(f)$. Call the polynomial
$f_{c_1,\dots,c_{2j+1}}=g(p_2,\dots,p_{2i})$ {\em a multiplier} of
$p_1^{c_1}p_3^{c_3}\dots p_{2j+1}^{c_{2j+1}}$ in $f$, or multiplier
at $(c_1,\dots,c_{2j+1})$ in $f$ for short. Informally speaking, a
multiplier is the corresponding coefficient after back substitution
of power sums.

%
%

For positive integers $n,k$ with $k\geqslant j(n)$ by $q_{(n,k)}$
and $r_{(n,k)}$ denote the multiplier of $(0,\dots,0)$ and
$(0,\dots,0,1)$ in $p_k$ respectively.

\subsection*{Preliminary remarks}

\begin{remark} It is sufficient to prove Theorem \ref{main} for
homogenous polynomials. Further, everywhere we are proving some
polynomial $f$ is representable via power sums we consider $f$ to be
homogeneous, without specifically mentioning this.
\end{remark}

\begin{remark}\label{divizibility} If $f=f_1f_2\neq0$ and $f,f_1\in S_n$,
then also $f_2\in S_n$. Further this is used without mentioning.
\end{remark}

\begin{lemma}\label{lin_independence_of_homogenous_products}
Let $h_{(k,\ell})$ be the complete homogenous polynomial of degree
$\ell$ in variables $x_{1},...,x_{k}$. Then products of the form
$h_{(k,\ell_{1})}...h_{(k,\ell_{k})}$ $(\ell_{1} \geq ... \geq
\ell_{k})$ form a basis of symmetric polynomials in
$x_{1},...,x_{k}$.
\end{lemma}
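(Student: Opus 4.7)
The plan is to prove the lemma by a dimension count combined with a triangular-factorization argument for the change of basis into the monomial symmetric polynomials.

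First I would note that $\{m_\mu : \ell(\mu) \le k\}$ is a basis of $F[x_1,\dots,x_k]^{S_k}$, so the degree-$d$ homogeneous component has dimension $p(d,k)$, the number of partitions of $d$ into at most $k$ parts. Setting $h_{(k,0)}:=1$, the proposed products $h_{(k,\ell_1)}\cdots h_{(k,\ell_k)}$ of degree $d$ with $\ell_1 \ge \cdots \ge \ell_k \ge 0$ are in natural bijection with partitions of $d$ having at most $k$ parts, so they also number $p(d,k)$. Spanning and linear independence are therefore equivalent, and it will suffice to prove linear independence.

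To this end, I would expand each $h_\lambda := h_{(k,\ell_1)}\cdots h_{(k,\ell_k)}$ in the monomial basis. A direct term-by-term multiplication yields $h_\lambda = \sum_\mu N_{\lambda\mu}\, m_\mu$, where $N_{\lambda\mu}$ counts the $k\times k$ nonnegative integer matrices with row sums $\lambda$ and column sums (a rearrangement of) $\mu$. The matrix $N$ is symmetric, but it is not triangular in dominance order (already visible for $k=2$, $\lambda=(2,1)$), so to see $\det N \ne 0$ I would factor $N$ through the Schur basis $\{s_\mu\}$, using the classical unitriangular expansions
\[
h_\lambda = \sum_{\mu \unrhd \lambda} K_{\mu\lambda}\, s_\mu, \qquad s_\mu = \sum_{\nu \unlhd \mu} K_{\mu\nu}\, m_\nu
\]
in terms of Kostka numbers, with $K_{\lambda\lambda} = K_{\mu\mu} = 1$. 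Their composition is exactly $N$, which is therefore invertible as a product of two invertible unitriangular matrices (in opposite orientations with respect to the dominance order).

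The main obstacle is this invertibility step. Invoking Schur polynomials is clean but introduces machinery not developed in the paper; a more elementary alternative begins from the algebraic independence of $h_1,\dots,h_k$ (the $h$-form of the fundamental theorem of symmetric polynomials), which gives the standard basis $\{h_1^{a_1}\cdots h_k^{a_k}\}$, and then rewrites any factor $h_j$ with $j>k$ via the Newton-type identity $\sum_{i=0}^{j} (-1)^i e_i h_{j-i} = 0$ together with the expression of each $e_i$ ($i\le k$) as a polynomial in $h_1,\dots,h_k$. With a suitable ordering by total degree and number of parts in the indexing partition, the resulting change of basis between $\{h_1^{a_1}\cdots h_k^{a_k}\}$ and the proposed $\{h_\lambda\}$ is triangular, hence invertible.
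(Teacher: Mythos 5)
Your argument is correct, and it complements rather than reproduces the paper's. Both proofs pivot on the same dimension count --- degree-$d$ products $h_{(k,\ell_1)}\cdots h_{(k,\ell_k)}$ are in bijection with partitions of $d$ into at most $k$ parts, which is exactly the dimension of the degree-$d$ homogeneous component --- and both route through Schur polynomials, but you establish the opposite half directly. The paper proves \emph{spanning}: by the first Jacobi--Trudi identity each $s_\lambda$ is a determinant in the $h_{(k,\ell)}$, hence a linear combination of the proposed products, and since the Schur polynomials form a basis the products generate; the count then upgrades a generating set to a basis. You instead prove \emph{linear independence}: the transition matrix $N$ from the $h_\lambda$ to the monomial basis factors through the Schur basis as a product of two Kostka matrices, each unitriangular for the dominance order, hence is invertible (and the restriction to partitions with at most $k$ parts is a principal submatrix, so unitriangularity survives the passage to $k$ variables). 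The two routes invoke classical facts of comparable weight, and your observation that $N$ itself is \emph{not} dominance-triangular --- so the detour through $s_\mu$ is genuinely needed --- is a point the paper never has to confront. Your fully elementary alternative via the algebraic independence of $h_1,\dots,h_k$ and the Wronski/Newton relations would remove Schur polynomials altogether, but as written the triangularity of that change of basis is asserted rather than checked; the Kostka route is the one that is actually complete, and it stands on its own.
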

\begin{proof}
    In this proof we use one more notation: let $s_\lambda$ denote Schur polynomial,
    associated with an in integer partition $\lambda$ of length at most $k$. Use the first Jacobi-Trudi formula:
    $$
    s_{\lambda} = \det(h_{(k,\lambda_{i} + j - i)})_{i,j = 1}^{l(\lambda)} =
    \det\left[ \begin{matrix} h_{(k,\lambda_1)} & h_{(k,\lambda_1 + 1)} & \dots & h_{(k,\lambda_1 + k - 1)} \\
    h_{(k,\lambda_2-1)} & h_{(k,\lambda_2)} & \dots & h_{(k,\lambda_2+k-2)} \\
    \vdots & \vdots & \ddots & \vdots \\
    h_{(k,\lambda_k-k+1)} & h_{(k,\lambda_k-k+2)} & \dots & h_{(k,\lambda_k)} \end{matrix} \right].
    $$

 Thus,  $s_{\lambda}$ are linearly representable by the products of the form $h_{(k,\ell_{1})}...h_{(k,\ell_{k})}$.
 Schur polynomials form a linear basis in the space of symmetric polynomials. So, products $h_{(k,\ell_{1})}...h_{(k,\ell_{k})}$
  form a linear generating set, but for every degree $d$ the number of orbits of $d$-degree monomials is the number of
integer partitions of $d$ with maximal length $k$, thus equals the
number of
 degree $d$ products $h_{(k,\ell_{1})}...h_{(k,\ell_{k})}$. 
\end{proof}

\begin{lemma}\label{lin_independence}
Given a positive integer $n$,
consider products
$$
r_{(n,2m_{1}+1)}r_{(n,2m_{2}+1)}\cdots r_{(n,2m_{i}+1)}
$$
for all collections $(m_1,\dots,m_{j})$ where $j\leqslant
m_1\leqslant m_2 \leqslant\cdots \leqslant m_{i}$. Such products are
linearly independent.
\end{lemma}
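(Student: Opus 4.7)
The plan is to derive an explicit closed-form for $r_{(n,2m+1)}$, identifying it up to a nonzero scalar with a complete homogeneous symmetric polynomial in $i$ auxiliary variables, after which linear independence will follow from Lemma~\ref{lin_independence_of_homogenous_products}.

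Pass to the quotient ring $R = F[p_1,\dots,p_n]/(p_1,p_3,\dots,p_{2j-1}) \cong F[p_2,p_4,\dots,p_{2i},p_{2j+1}]$. A parity argument (every monomial of $Repr(p_{2m+1})$ has odd total weight in $x_1,x_3,\dots,x_{2j+1}$, since $2m+1$ is odd) shows that the image $\bar p_{2m+1}$ contains only odd powers of $p_{2j+1}$, so $\bar p_{2m+1} \equiv r_{(n,2m+1)}\,p_{2j+1} \pmod{p_{2j+1}^3}$. A short induction via Newton's identities shows that in $R$ every odd elementary symmetric function $e_{2\ell-1}$ with $\ell\leq j$ vanishes, while $e_{2j+1} = p_{2j+1}/(2j+1)$, and each $e_{2\ell}$ with $\ell\leq i$ lies in $F[p_2,\dots,p_{2i}]$.

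Setting $V(s) := 1 + e_2 s + e_4 s^2 + \dots + e_{2i} s^i$, the full generating function decomposes as $E(t) = V(t^2) + e_{2j+1} t^{2j+1}$. Expanding the logarithmic derivative $E'(t)/E(t) = \sum_{k\geq 1}(-1)^{k-1} p_k t^{k-1}$ to first order in $e_{2j+1}$, extracting the coefficient of $t^{2m}$, and setting $s = t^2$ yields
$$
r_{(n,2m+1)} \;=\; [s^{m-j}]\left(V(s)^{-1} + \tfrac{2}{2j+1}\, s\bigl(V(s)^{-1}\bigr)'\right).
$$
Introduce auxiliary variables $y_1,\dots,y_i$ via $V(s) = \prod_{\ell=1}^{i}(1+y_\ell s)$, so that the correspondence $e_{2\ell}(x) = e_\ell(y_1,\dots,y_i)$ identifies $F[p_2,\dots,p_{2i}]$ with the algebra of symmetric polynomials in $y_1,\dots,y_i$. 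Then $V(s)^{-1} = \sum_{k\geq 0}(-1)^k h_k(y_1,\dots,y_i)\,s^k$, and the operator $1 + \tfrac{2}{2j+1}\,s\partial_s$ rescales the $s^k$-coefficient by $(2j+1+2k)/(2j+1)$. Substituting $k = m-j$ gives
$$
r_{(n,2m+1)} \;=\; \frac{2m+1}{2j+1}\,(-1)^{m-j}\,h_{m-j}(y_1,\dots,y_i),
$$
a nonzero scalar multiple of $h_{m-j}$.

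With this formula in hand, each product $\prod_{\ell=1}^{i} r_{(n,2m_\ell+1)}$ equals a nonzero constant times $\prod_{\ell=1}^{i} h_{(i,\,m_\ell - j)}(y_1,\dots,y_i)$, and the exponents $m_\ell - j$ range over all weakly increasing $i$-tuples of nonnegative integers as $(m_1,\dots,m_i)$ varies with $j \leq m_1 \leq \dots \leq m_i$. Lemma~\ref{lin_independence_of_homogenous_products}, applied in the variables $y_1,\dots,y_i$, then shows these $h$-products are linearly independent, hence so are the $r$-products. The main technical obstacle is the generating-function calculation establishing the closed form $r_{(n,2m+1)}\propto h_{m-j}(y)$; once this proportionality is in place, the reduction to Lemma~\ref{lin_independence_of_homogenous_products} is immediate.
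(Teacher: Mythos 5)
Your proof is correct and takes essentially the same route as the paper: both derive the closed form $r_{(n,2m+1)}=\frac{2m+1}{2j+1}\,h_{m-j}$ in $i$ auxiliary variables via Newton's identities and then conclude by Lemma~\ref{lin_independence_of_homogenous_products}. The only difference is cosmetic --- the paper realizes the auxiliary variables concretely by specializing $x_{i+\ell}\mapsto -x_\ell$ (so $y_\ell\leftrightarrow -x_\ell^2$) and solves the resulting recurrence by induction, whereas you work formally in the quotient ring with generating functions and the logarithmic derivative.
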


\begin{remark} Since $r_{(n,j(n))}=1$ it is not important whether we
speak about the products of exactly $i$ $r$-s, or at most $i$. But
it is important here, that $r_{(n,k)}$ is not defined for $k<j(n)$.
\end{remark}

\begin{proof}[First proof.]
Just in this proof we need one more notation. By $\sigma_{(n,k)}\in
F[x_1,\dots,x_n]$ denote the elementary symmetric polynomial of
degree $k$ in variables $x_1,\dots,x_n$. For smaller number of
variables $n'<n$ we assume the natural embedding
$F[x_1,\dots,x_{n'}]\to F[x_1,\dots,x_n]$. Also, for $f\in S_n$ by
$\widetilde{f}$ denote the multiplier of $(0,\dots,0)$ in $f$.

For the sake of brevity denote $y_{\ell+i}=x_{\ell+i}+x_\ell$ for
$\ell \in [1..i]$, also in case $n$ is odd put $y_n=x_n$. Note that
for any $f\in F[x_1,\dots,x_n]$ holds the following:
$f(x_1,\dots,x_n)-f(x_1,\dots,x_i,x_{i+1}-y_{i+1},\dots,x_n-y_n)$
lies in the ideal $(y_{i+1},\dots,y_n)$. Denote this ideal by $I$
and use notation $f\simeq g$ as a shortcut for $f-g\in I$. We claim
\begin{itemize}
\item $p_{2k+1}\simeq 0$ and $\sigma_{(n,2k+1)} \simeq 0$ for any $k$;
\item $p_{2k}\simeq 2(x_1^{2k}+\cdots+x_i^{2k})$ and $\sigma_{(n,2k)} \simeq (-1)^k
\sigma_{(i,k)}(x_1^2,\dots,x_i^2)$ for any $k$;
\item for $f\in S_n$ holds $f\simeq \widetilde{f}$; indeed, $f- \widetilde{f} \in
(p_1,\dots,p_{2j+1})$, use the first point;
\item if two sequences of polynomials $\{f_k\}_{k=0}^\infty,\{g_k\}_{k=0}^\infty \in
F[x_1,\dots,x_n]$ are defined recurrently by
$$
f_k=\sum_{\ell \in [1,m]} \alpha_\ell f_{k-\ell} \quad \text{and}
\quad g_k=\sum_{\ell \in [1,m]} \beta_\ell g_{k-\ell} \quad \forall,
k\geqslant m
$$
then $\alpha_\ell\simeq \beta_\ell$ and $f_k\simeq g_k$ for $k<m$
guarantee $f_k\simeq g_k$ for all $k$.
\item if $f\neq 0 \in F(x_1,\dots x_i)$ then $f \not\simeq 0$.
\end{itemize}

We aim to calculate the sequence $r_{(n,2k+1)}$ up to equivalence $
\simeq$. Recall the recurrence for power sum series:
$$
\sum_{\ell \in [0..n]} (-1)^\ell \sigma_{(n,\ell)} p_{k-\ell}=0.
$$
This yields the following recurrence for $r_{(n,2k+1)}$:
$$
\sum_{\ell \in [0..n]} (-1)^\ell \widetilde{\sigma_{(n,\ell)}}
r_{(n,d-\ell)}+\sum_{\ell \in [0..n]} (-1)^\ell \alpha_\ell
q_{(n,d-\ell)}=0
$$
where $\alpha_\ell$ is the multiplier at $p_1^0\dots
p_{2j-1}^0p_{2j+1}^1$ in $\sigma_{(n,\ell)}$. Since we are
interested in $r_{(n,d-\ell)}$ up to equivalence, replace
$\widetilde{\sigma_{(n,\ell)}}$ with  $0$ for odd $\ell$ and
$(-1)^{\ell/2}\sigma_{(i,\ell/2)}(x_1^2,\dots,x_i^2)$ for even
$\ell$. Similarly, $q_{(n,k-\ell)}\simeq
2\big(x_1^{k-\ell}+\cdots+x_i^{k-\ell}\big)$ for even $k-\ell$, $0$
for odd. Next, $\alpha_\ell=0$ for $\ell<2j+1$ due to degree. To
prove $\alpha_{2j+1}=\frac{1}{2j+1}$ substitute different roots of
$x^{2j+1}-1=0$ to $x_1,\dots,x_{2j+1}$ (and $x_n=0$ in case $n$ is
even). In case of even $n$ it is unnecessary to calculate
$\alpha_{2j+2}$, since it is paired with zero $q_{(n,d-2j-2)}$ (we
are interested in odd $d$-s only). Replacing for convenience
$d=2k+2j+1$ come to
$$
\sum_{2\ell \in [0..n]} (-1)^\ell
\sigma_{(i,\ell)}(x_1^2,\dots,x_i^2)
r_{(n,2k+2j+1-2\ell)}-\frac{2}{2j+1}\big(x_1^{2k}+\cdots+x_i^{2k}\big)=0
$$
for $k>0$, $r_{n,2j'+1}=0$ for $j'<j$ and $r_{n,2j+1}=1$.

The solution of this recurrence is $r_{(n,2k+2j+1)} \simeq
\frac{2k+2j+1}{2j+1}h_{i,k}(x_{1}^{2},...,x_{i}^{2})$, this is
easily checked via induction on $k$. The application of Lemma
\ref{lin_independence_of_homogenous_products} and the last dot of
the claim in the beginning of this proof finishes it.

\end{proof}

%
%
%

\begin{corollary}
Proper products are linearly independent.
\end{corollary}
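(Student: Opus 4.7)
I would pass to the polynomial-ring side of the power-sum representation and isolate proper products one at a time by a nested induction whose innermost step reduces to Lemma~\ref{lin_independence}.

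Introduce the isomorphism
\[
\phi\colon S_n\longrightarrow F[p_2,p_4,\dots,p_{2i}]\bigl[y_1,y_3,\dots,y_{2j+1}\bigr]
\]
given on $Repr(f)$ by renaming odd-indexed variables $x_{2\ell+1}\mapsto y_{2\ell+1}$ and substituting $x_{2\ell}\mapsto p_{2\ell}$; the target is a genuine polynomial ring in algebraically independent generators. For a proper product $P_\alpha=p_1^{c_1^\alpha}\cdots p_{2j+1}^{c_{2j+1}^\alpha}E_\alpha$ with elder block $E_\alpha=p_{2m_1^\alpha+1}\cdots p_{2m_{i_\alpha}^\alpha+1}$ one has $\phi(P_\alpha)=y_1^{c_1^\alpha}\cdots y_{2j+1}^{c_{2j+1}^\alpha}\phi(E_\alpha)$.

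The crucial preliminary observation is $q_{(n,2m+1)}=0$ for every elder $m>j$: any monomial in $Repr(p_{2m+1})$ using only even-indexed $p$'s has even weighted degree, contradicting the total weighted degree $2m+1$. Hence $\phi(p_{2m+1})\big|_{y_1=\cdots=y_{2j-1}=0}=r_{(n,2m+1)}y_{2j+1}+O(y_{2j+1}^2)$, and multiplying out
\[
\phi(E_\alpha)\big|_{y_1=\cdots=y_{2j-1}=0}=y_{2j+1}^{i_\alpha}\prod_{k=1}^{i_\alpha}r_{(n,2m_k^\alpha+1)}+O\bigl(y_{2j+1}^{i_\alpha+1}\bigr).
\]

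Now assume $\sum_\alpha\lambda_\alpha P_\alpha=0$, apply $\phi$, set $\vec{u}_\alpha:=(c_1^\alpha,c_3^\alpha,\dots,c_{2j-1}^\alpha)$, and run an outer induction on $\vec u$ in lexicographic order with hypothesis $\lambda_\beta=0$ whenever $\vec{u}_\beta$ is lex-smaller than the current $\vec u$. Extracting the coefficient of $y_1^{c_1}\cdots y_{2j-1}^{c_{2j-1}}$ from the transported identity, nonzero contributions require $\vec{u}_\alpha\le\vec u$ componentwise, and any strict componentwise inequality forces strict lex inequality; the outer hypothesis therefore leaves only $\alpha$ with $\vec{u}_\alpha=\vec u$. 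An inner induction on $d_\alpha:=c_{2j+1}^\alpha+i_\alpha$ then extracts the coefficient of $y_{2j+1}^d$: by the displayed expansion, nonzero contributions require $d_\alpha\le d$, the inner hypothesis kills $d_\alpha<d$, and what remains is
\[
\sum_{\alpha\,:\,\vec{u}_\alpha=\vec u,\,d_\alpha=d}\lambda_\alpha\prod_{k=1}^{i_\alpha}r_{(n,2m_k^\alpha+1)}=0\qquad\text{in }F[p_2,p_4,\dots,p_{2i}].
\]
Padding each product with factors $r_{(n,2j+1)}=1$ to length $i$ turns these into the pairwise distinct products from Lemma~\ref{lin_independence}; hence each $\lambda_\alpha$ vanishes, and both inductions terminate because the original combination is finite.

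The delicate step is the identification of the leading $y_{2j+1}$-term of $\phi(E_\alpha)|_{y_1=\cdots=y_{2j-1}=0}$ as $\prod r$: this rests squarely on the parity vanishing $q_{(n,2m+1)}=0$, without which the leading term would mix $q$'s and $r$'s and the reduction to Lemma~\ref{lin_independence} would break down.
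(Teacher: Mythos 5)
Your proof is correct and follows the route the paper intends: the paper states this corollary without proof as an immediate consequence of Lemma~\ref{lin_independence}, and your double induction --- first isolating the exponents of $p_1,\dots,p_{2j-1}$ via the algebraic independence of power sums, then extracting the lowest $p_{2j+1}$-degree term of the elder block using $q_{(n,2m+1)}=0$ so that Lemma~\ref{lin_independence} applies to the resulting $r$-products --- is exactly the multiplier-extraction mechanism the authors use in the proof of Lemma~\ref{positive_defree_trick}. No gaps; the padding by $r_{(n,2j+1)}=1$ to length $i$ matches the paper's own remark following Lemma~\ref{lin_independence}.
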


\begin{remark}\label{pi} The homomorphism $\pi: F[x_1,\dots,x_n] \to
F[x_1,\dots,x_{n-2}]$ given by $\pi(x_n)=\pi(x_{n-1})=0$ and
$\pi(x_k)=x_k$ for $k\in [1..n-2]$ maps $T_{n} \to T_{n-2}$, also it
maps power sums to power sum $\pi(p_k)=\overline{p_k}$ (we use upper
bar to discern polynomials in $F[x_1,\dots,x_{n-2}]$ from
polynomials in $F[x_1,\dots,x_n]$). This allows us to formulate the
following definition.
\end{remark}

\begin{definition}\label{preim}
Let $f\in T_{n}$ and assume $\pi(f)$ ($\pi$ as in Remark above) can
be represented through odd power sums, that is
$\pi(f)=g(\overline{p_1},\overline{p_3},\dots)$ for some $g\in
F[x_1,\dots]$. Then we call the polynomial $g(p_1,p_3,\dots)$ {\em a
canonical preimage of representation}
$\pi(f)=g(\overline{p_1},\overline{p_3},\dots)$ and denote it
$PreIm(f)$.
\end{definition}

Certainly, the equality $f=PreIm(f)$ does not necessarily holds, but
the following lemma provides some connection between $f$ and
$PreIm(f)$.

\begin{lemma}\label{div_delta}
Let  $f\in T_{n}$ and assume $\pi(f)$ can be represented through
power sums. Then $f-PreIm(f)$ is divisible by $\delta$.
\end{lemma}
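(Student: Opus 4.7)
The plan is to set $h := f - PreIm(f)$ and prove that $h$ vanishes on the hyperplane $\{x_i + x_j = 0\}$ for every pair $i \neq j$; by unique factorization in $F[x_1,\dots,x_n]$ this will give divisibility by $\delta$.

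First I would check that $h \in T_n$. By construction $PreIm(f)$ is a polynomial in the odd power sums $p_{2k+1}$, each of which lies in $T_n$ (the substitution $x_1=t,\,x_2=-t$ kills $t^{2k+1}+(-t)^{2k+1}$), and $T_n$ is a subalgebra. Combined with $f \in T_n$ this gives $h \in T_n$; in particular $h$ is symmetric. Next I would compute $\pi(h)$. Since $\pi$ is a ring homomorphism with $\pi(p_k)=\overline{p_k}$ (Remark \ref{pi}), Definition \ref{preim} yields $\pi(PreIm(f)) = g(\overline{p_1},\overline{p_3},\dots) = \pi(f)$, hence $\pi(h)=0$, i.e.\ $h(x_1,\dots,x_{n-2},0,0)=0$. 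By symmetry of $h$, setting any two distinct variables to zero gives zero; in particular $h(0,0,x_3,\dots,x_n)=0$.

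The crucial step is to upgrade this pointwise vanishing to vanishing on the whole hyperplane $\{x_1+x_2=0\}$ using the $T_n$-condition: because $h(t,-t,x_3,\dots,x_n)$ is independent of $t$, it equals its value at $t=0$, which is $h(0,0,x_3,\dots,x_n)=0$. Thus $h(t,-t,x_3,\dots,x_n)\equiv 0$ and so $(x_1+x_2)\mid h$. Applying the action of the symmetric group on the variables, we obtain $(x_i+x_j)\mid h$ for every pair $i\neq j$. The linear forms $(x_i+x_j)_{i<j}$ are pairwise distinct irreducibles in the UFD $F[x_1,\dots,x_n]$, hence pairwise coprime, so their product $\delta$ divides $h$, which is the desired conclusion.

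I do not anticipate genuine difficulty here: the whole content is the observation that the $T_n$-condition, together with the easy vanishing $h(0,0,x_3,\dots,x_n)=0$ coming from $\pi(h)=0$, converts a single point of vanishing into vanishing on an entire hyperplane; after that only routine unique factorization is used. The only mild subtlety is to make sure that the invocation of symmetry to transport the equality $h(x_1,\dots,x_{n-2},0,0)=0$ into $h(0,0,x_3,\dots,x_n)=0$ is used before the $T_n$-argument, so that the latter applies at the pair $(x_1,x_2)$ where the substitution is defined.
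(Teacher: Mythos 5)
Your proof is correct and follows essentially the same route as the paper: both deduce from $\pi(f-PreIm(f))=0$ and symmetry that the substitution $x_1=t,\ x_2=-t$ sends $f-PreIm(f)$ to something vanishing at $t=0$, then use $t$-independence to conclude it vanishes identically, giving divisibility by each $x_k+x_\ell$. Your phrasing via evaluation at $t=0$ rather than the paper's monomial-by-monomial argument is a cosmetic difference only.
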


\begin{proof} Polynomial $f-PreIm(f)$ have non-zero coefficient only at monomials with at most one zero exponent.
Indeed, by definition of $PreIm$, $f-PreIm(f)$ vanishes under
substitution $x_{n-1}=x_n=0$. So, each monomial contains one of
$x_{n-1},x_n$ with positive exponent. Due to symmetry, the same is
true for every pair $x_k,x_m$.

So, after substitution $x_1=t,\,x_2=-t$ the image of $f-PreIm(f)$ is
divisible by $t$, but it shouldn't depend on $t$. Both statements
are true only if the image is zero. The latter implies that
$f-PreIm(f)$ is divisible by $x_1+x_2$, use symmetry again.
\end{proof}

\begin{remark}\label{obvious} In case $deg(f)<\binom{n}{2}$ Lemma
\ref{div_delta} implies that $f-PreIm(f)$ (if defined) equals zero.
Indeed, $deg(PreIm(f))=deg(f)$ so $deg(f-PreIm(f))<deg(\delta)$,
thus divisibility implies $f-PreIm(f)=0$.
\end{remark}

One more nearly obvious statement
\begin{lemma}\label{obvious2}
For any symmetric polynomial $s \in S_n$ holds $\delta s \in T_n$.
\end{lemma}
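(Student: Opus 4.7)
The plan is to verify directly the two defining conditions of $T_n$ from Definition~\ref{T(n)} for the product $\delta s$. Both conditions turn out to follow almost immediately from the shape of $\delta$, so no machinery beyond the definitions is needed.

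First I would check symmetry. The factors $\{x_k + x_\ell : k \neq \ell\}$ are permuted among themselves by any permutation $\sigma \in S_n$ of the variables (each factor $x_k + x_\ell$ is itself symmetric in its two indices, so the permuted list is the same multiset of factors). Hence $\delta \in S_n$, and consequently $\delta s \in S_n$ as a product of two symmetric polynomials.

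Next I would check the substitution condition. The key observation is that $(x_1 + x_2)$ occurs as a factor of $\delta$. Under the substitution $x_1 = t$, $x_2 = -t$, this factor becomes $t + (-t) = 0$, so the entire product $\delta$ evaluates to the zero polynomial, and therefore so does $\delta s$. The zero polynomial is trivially independent of $t$, which verifies condition (2) of Definition~\ref{T(n)}.

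There is no real obstacle here; the statement is a one-line check. The only point where one might want to be careful is to confirm that $(x_1 + x_2)$ genuinely appears as a factor of $\delta$ under the paper's convention (the product over pairs $k \neq \ell$, or equivalently over unordered pairs), but under either reading that factor is present, which is all the argument requires.
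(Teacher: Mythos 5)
Your proof is correct and follows the same route as the paper's: symmetry of $\delta s$ from symmetry of both factors, and vanishing under the substitution $x_1=t$, $x_2=-t$ because the factor $x_1+x_2$ of $\delta$ becomes zero. You simply spell out the details (why $\delta\in S_n$, why the factor is present) that the paper leaves implicit.
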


\begin{proof} Indeed, $\delta s \in S_n$ because $\delta, s \in
S_n$. Under substitution $x_1=t,\;x_2=-t$ polynomial $\delta s$
vanishes, thus does not depends on $t$.
\end{proof}

We are going to prove Theorem \ref{main} via induction on $n$ by
steps $n \to n+2$, inside one step via one more induction on degree
of a polynomial. Next lemma provides the base of the outer
induction.

\begin{lemma}
Theorem \ref{main} holds true for the number of variables $n=2,3$.
\end{lemma}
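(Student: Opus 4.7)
The plan is to dispatch the two cases separately via direct computation, invoking the corollary to Lemma \ref{lin_independence} for linear independence and focusing on the spanning direction.

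For $n=2$, the condition $f(t,-t)\in F$ combined with symmetry forces $f = c + p_1\,g(p_1,p_2)$, so $T_2 = F \oplus p_1 S_2$ with $F$-basis $\{1\}\cup\{p_1^a p_2^b : a\geqslant 1,\, b\geqslant 0\}$. To express each such $p_1^a p_2^b$ as a combination of proper products it suffices to handle $p_1 p_2^b$ (and then multiply by $p_1^{a-1}$). I would induct on $b$ using the two-variable Newton recurrence $p_k = p_1 p_{k-1} - \sigma_2 p_{k-2}$ together with $\sigma_2=(p_1^2-p_2)/2$: the top $p_2$-degree term of $p_{2b+1}$ is $c_b\, p_1 p_2^{b}$ with $c_b\neq 0$, so one solves for $p_1 p_2^{b}$ in terms of $p_{2b+1}$, lower odd power sums, and pure powers of $p_1$.

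For $n=3$, I would first establish the structural equality $T_3 = F[p_1] + \delta\cdot S_3$ with $\delta = \tfrac{1}{3}(p_1^3-p_3)$. The inclusion $\supseteq$ is Lemma \ref{obvious2}. For $\subseteq$, since $\pi(f)\in T_1 = F[x_1]$ is trivially a polynomial in $\overline{p_1}=x_1$, $PreIm(f)$ exists and lies in $F[p_1]$; by Lemma \ref{div_delta} one then has $f - PreIm(f) = \delta\,s$ with $s\in S_3$ (symmetry of $s$ via Remark \ref{divizibility}). Elements of $F[p_1]$ already lie in the span $P$ of proper products, and $\delta\cdot p_1^a p_3^c = \tfrac{1}{3}(p_1^{a+3}p_3^c - p_1^a p_3^{c+1})\in P$. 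Since $P$ is closed under multiplication by $p_1$ and $p_3$, the remaining task reduces to showing $p_2^{b}\delta\in P$ for every $b\geqslant 1$.

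I would prove the latter by induction on $b$. The base $b=1$ is a direct calculation with the three-variable Newton recurrence $p_k = p_1 p_{k-1}-e_2 p_{k-2}+e_3 p_{k-3}$, giving $p_5 = \tfrac{5}{6}p_2 D + \tfrac{5}{6}p_1^2 p_3 + \tfrac{1}{6}p_1^5$ with $D = p_3 - p_1^3 = -3\delta$, whence $p_2 \delta \in P$. For the inductive step one tracks the top $p_2$-degree in $p_{2b+3}$ to establish
\[
p_{2b+3} \;=\; \gamma_b\, p_2^{b} D + (\text{terms of strictly smaller } p_2\text{-degree}),\qquad \gamma_b\neq 0.
\]
Since $p_{2b+3}$ is elder and therefore lies in $P$, and the lower $p_2$-degree remainder reduces to $P$ by the inductive hypothesis together with closure of $P$ under $F[p_1,p_3]$, we conclude $p_2^{b}\delta\in P$. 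The principal obstacle is confirming $\gamma_b\ne 0$ and verifying that the lower remainder genuinely reduces to earlier cases; this demands careful bookkeeping of $p_2$-degrees through the Newton recurrence, noting that both $e_2 = (p_1^2-p_2)/2$ and $e_3 = (p_1^3-3p_1 p_2+2p_3)/6$ themselves contribute $p_2$.
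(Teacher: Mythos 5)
Your overall architecture for $n=3$ essentially coincides with the paper's: both reduce everything to showing that $\delta p_2^b$ lies in the span $P$ of proper products, both extract this from the expansion of the elder power sum $p_{2b+3}$, and both hinge on the non-vanishing of the leading coefficient ($\gamma_b$ in your notation, $\alpha_{0,b,0}$ in the paper's). Your $n=2$ case is fine: the decomposition $T_2=F\oplus p_1S_2$ with induction on the $p_2$-degree is an equivalent reorganization of the paper's induction on total degree via divisibility by $x_1+x_2$.

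The gap is that you do not prove $\gamma_b\neq 0$; you explicitly defer it as ``the principal obstacle.'' This is the heart of the $n=3$ case --- everything else in your reduction is routine, and without it the induction does not close. The paper settles it with a short substitution argument: put $x_1=\phi$, $x_2=\psi$, $x_3=-\phi-\psi$, so that $p_1\mapsto 0$ while $p_3$ and $\delta$ become divisible by $\phi$; then the left-hand side $p_{2b+3}-p_1^{2b+3}\mapsto p_{2b+3}$ is divisible by $\phi$ but not by $\phi^2$, whereas every term of $\delta\sum\alpha_{k_1,k_2,k_3}p_1^{k_1}p_2^{k_2}p_3^{k_3}$ other than $\delta\,\alpha_{0,b,0}\,p_2^b$ is divisible by $\phi^2$, forcing $\alpha_{0,b,0}\neq0$. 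Your proposed alternative (tracking leading $p_2$-coefficients through the Newton recurrence) does work --- one obtains $\gamma_b=\tfrac{2b+3}{3\cdot 2^{b}}$ from the recursion $\gamma_b=\tfrac12\gamma_{b-1}+\tfrac1{3\cdot 2^{b-1}}$ --- but it has to be carried out, not announced. A secondary issue: your claim that the ``terms of strictly smaller $p_2$-degree'' in $p_{2b+3}$ reduce to $P$ ``by the inductive hypothesis together with closure under $F[p_1,p_3]$'' silently uses that each positive-$p_2$-degree coefficient of that remainder is divisible by $D=-3\delta$; a monomial such as $p_2^{k}p_3^{m}$ with $k\geq1$ is not even in $T_3$, so the inductive hypothesis does not apply to an arbitrary remainder. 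This is repaired by first writing $p_{2b+3}-p_1^{2b+3}=\delta g$ with $g\in S_3$ via Lemma \ref{div_delta} (which is exactly how the paper organizes the step), but it must be said.
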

\begin{proof} Consider case $n=2$. Note that the second condition in
the definition of monotypical supersymmetry is equivalent to
divisibility by $x_1+x_2$. Further, in case degree of $f\in T_2$ is
even, $f$ is divisible by $(x_1+x_2)^2$, since $f$ is representable
via $x_1+x_2$ and $x_1^2+x_2^2$, and must have even degree. Prove
the statement via induction on $d=deg(f)$, the base $d=1$ is
obvious. Next, if $d$ is even, represent $f=(x_1+x_2)f_1, \;\;
f_1\in T_2$ and use induction hypothesis. Assume $d$ is odd, then
$x_1^d+x_2^d$ is divisible by $x_1+x_2$ but not divisible by
$(x_1+x_2)^2$. So, there exists $\alpha \in F$ such that
$f-\alpha(x_1^d+x_2^d)$ is divisible by $(x_1+x_2)^2$ (certainly,
even by $(x_1+x_2)^3$). Again, represent $f-\alpha(x_1^d+x_2^d) =
(x_1+x_2)f_1, \;\; f_1\in T_2$ and use  induction hypothesis.

Consider case $n=3$. We prove two statements. First,
$\delta(x_1^2+x_2^2+x_3^2)^k$ for any integer non-negative $k$ is
representable as linear combination of proper products. Secondly,
the previous statement implies that any $f\in T_3$ is representable.

First statement prove by induction on $k$. For $k=0$ it holds since
$\delta=\frac{p_1^3-p_3}{3}$. For arbitrary $k$ consider
$f=p_{2k+3}$, then $PreIm(f)=p_1^{2k+3}$, by Lemma \ref{div_delta}
divisibility holds $p_{2k+3}-p_1^{2k+3}=\delta g$, then $g \in S_3$.
Apply the representation of $g$ through $p_1,p_2,p_3$:
$$
p_{2k+3}-p_1^{2k+3}=\delta \sum_{(k_1,k_2,k_3):\,
k_1+2k_2+3k_3=2k}\alpha_{k_1,k_2,k_3} p_1^{k_1}p_2^{k_2}p_3^{k_3}.
$$
The induction hypothesis applies to every term on the right-hand
side except $\alpha_{0,k,0}p_2^{k}$. So, if $\alpha_{0,k,0}\neq 0$
we proved that $\delta p_2^{k}$ is representable through proper
products.

To show $\alpha_{0,k,0}\neq 0$ consider the substitution $x_1=\phi$,
$x_2=\psi$, $x_3=-\phi-\psi$. Under it $p_1$ vanishes, $p_3$ is
divisible by $\phi$, so does $\delta$. Further, left-hand side is
not divisible by $\phi^2$. The sole term in the right-hand side, not
divisible by $\phi^2$ is $\delta\alpha_{0,k,0}p_2^{k}$, thus
$\alpha_{0,k,0}\neq 0$.

To prove the second statement consider $f\in T_3$, calculate
$PreIm(f)$ (simply said, $\beta(x_1+x_2+x_3)^{deg(f)}$) where $\beta
\in F$ is the coefficient at $x_1^{deg(f)}$ in $f$. Just as above
$$
f-PreIm(f)=\delta \sum_{(k_1,k_2,k_3):\,
k_1+2k_2+3k_3=2k}\beta_{k_1,k_2,k_3} p_1^{k_1}p_2^{k_2}p_3^{k_3}.
$$
Apply the first statement separately to each term
$\beta_{k_1,k_2,k_3}p_2^{k_2}\delta$ and multiply achieved
representation by $p_1^{k_1}p_3^{k_3}$.
\end{proof}

The base for an inner induction is trivial, since any symmetric
polynomial of degree 1 is $\alpha(x_1+\cdots+x_n)$.

\begin{lemma}\label{positive_defree_trick}
Let $f=p_{2a_1+1}\cdots p_{2a_{i+1}+1}$ be a long product, and
assume $f$ is equal to some linear combination of some proper
products
$$
f=\alpha_1g_1+\cdots+\alpha_{m}g_m, \eqno{*}
$$ where $\alpha_1,\dots\alpha_m \in F$. Then all proper products $g_{\ell}=p_{1}^{c_1}p_{3}^{c_3}\cdots p_{2j+1}^{c_{2j+1}}p_{2b_1+1}\cdots
p_{2b_{i'}+1}$ with $\alpha_{\ell} \neq 0$ have at least one of the
numbers $c_1,c_3,\dots,c_{2j+1}$ positive.
\end{lemma}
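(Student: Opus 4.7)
The plan is to apply the multiplier operation $M_{(0,\dots,0,i+1)}$ (the coefficient at $x_1^0 x_3^0 \cdots x_{2j-1}^0 x_{2j+1}^{i+1}$ in the $Repr$ of a symmetric polynomial) to both sides of the assumed relation and then exploit Lemma \ref{lin_independence} to rule out the pure-elder proper products.

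First I would establish the auxiliary observation that $q_{(n,2m+1)}=0$ for every $m\geqslant 0$. This follows by a weight-parity argument: $q_{(n,2m+1)}$ lies in $F[p_2,\dots,p_{2i}]$, every monomial of which has even weight, whereas $p_{2m+1}$ has odd weight $2m+1$. Using the convolution rule $M_{(c)}(f_1f_2)=\sum_{(c)=(c')+(c'')}M_{(c')}(f_1)\,M_{(c'')}(f_2)$ iteratively across the $i+1$ factors of $f=p_{2a_1+1}\cdots p_{2a_{i+1}+1}$, the vanishing of every $q_{(n,2a+1)}$ forces the only nonzero distribution of the index $(0,\dots,0,i+1)$ among factors to be the one assigning $(0,\dots,0,1)$ to each of them. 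This yields $M_{(0,\dots,0,i+1)}(f)=\prod_{s=1}^{i+1}r_{(n,2a_s+1)}$, a product of $i+1$ $r$-polynomials.

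On the right-hand side, the explicit monomial factor $x_1^{c_1}x_3^{c_3}\cdots x_{2j-1}^{c_{2j-1}}$ that divides $Repr(g_\ell)$ forces $M_{(0,\dots,0,i+1)}(g_\ell)=0$ whenever any of $c_1,c_3,\dots,c_{2j-1}$ is positive, so only two families of $g_\ell$ survive: family $(A)$ of pure-elder products $g_\ell=p_{2b_1+1}\cdots p_{2b_{i'}+1}$ (precisely the ones the lemma wants to eliminate), and family $(B)$ of products $p_{2j+1}^{c}\cdot p_{2b_1+1}\cdots p_{2b_{i'}+1}$ with $c\geqslant 1$ (which already satisfy the lemma's conclusion). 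For each surviving $g_\ell$ the convolution formula expresses $M_{(0,\dots,0,i+1)}(g_\ell)$ as a sum over tuples $(d_1,\dots,d_{i'})$ of odd positive integers with $\sum d_s=i+1-c$, weighted by the higher-order multipliers $r^{(d)}_{(n,2b+1)}$, i.e.\ the coefficient at $x_{2j+1}^d$ in $Repr(p_{2b+1})$.

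The concluding step passes to the equivalence $\simeq$ used in the proof of Lemma \ref{lin_independence}, under which $r_{(n,2m+1)}\simeq\frac{2m+1}{2j+1}h_{(i,m-j)}(x_1^2,\dots,x_i^2)$. Modulo $\simeq$ the LHS becomes a product of $i+1$ complete homogeneous polynomials in $i$ variables, and by Lemma \ref{lin_independence_of_homogenous_products} such a product has a unique expansion in the basis of products of exactly $i$ complete homogeneous polynomials; the same basis controls the family $(B)$ contributions. The main obstacle is the explicit computation, modulo $\simeq$, of the higher-order multipliers $r^{(d)}_{(n,2b+1)}$ for $d>1$, together with the combinatorial bookkeeping required to match coefficients in the $h$-product basis; this parallels, but is more intricate than, the recurrence argument used to prove Lemma \ref{lin_independence}. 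Once these expansions are in hand, comparing coefficients in the basis forces the family $(A)$ coefficients $\alpha_\ell$ to vanish, completing the proof.
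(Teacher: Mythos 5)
There is a genuine gap, and it comes from your choice of exponent. You extract the multiplier at $(0,\dots,0,i+1)$, which makes the left-hand side \emph{nonzero} (a product of $i+1$ polynomials $r_{(n,2a_s+1)}$) and simultaneously forces every family~$(A)$ term $p_{2b_1+1}\cdots p_{2b_{i'}+1}$ (which has only $i'\leqslant i$ factors) to absorb the exponent $i+1$ using higher-order multipliers $r^{(d)}_{(n,2b+1)}$ with $d\geqslant 2$. These are never computed in the paper, Lemma~\ref{lin_independence} says nothing about them, and you explicitly defer their computation. More importantly, even granting those computations, your final step does not follow: the multiplier identity is a \emph{single} linear relation of the form $\prod_s r_{(n,2a_s+1)}=\sum_{(A)}\alpha_\ell M(g_\ell)+\sum_{(B)}\alpha_\ell M(g_\ell)$, and a single relation cannot force the family~$(A)$ coefficients to vanish unless you prove that the family~$(A)$ images are linearly independent modulo the span of the family~$(B)$ images \emph{and} of the left-hand side. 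Since all of these objects live in the same space of symmetric polynomials in $x_1^2,\dots,x_i^2$, and the left-hand side is itself a product of $i+1$ complete homogeneous polynomials re-expandable in the basis of Lemma~\ref{lin_independence_of_homogenous_products}, there is no a priori separation; ``comparing coefficients in the basis'' is exactly the unproved claim.

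The paper avoids all of this by choosing the exponent \emph{small}: it sets $k=c_{2j+1}+i'$ minimal over the surviving (non-dull) products, which under the contrary assumption satisfies $k\leqslant i$. At such a $k$ the long product on the left has multiplier \emph{zero} (its $i+1$ odd factors each have zero constant multiplier $q$, and one cannot distribute $k\leqslant i$ positive exponents among $i+1$ factors), the non-minimal right-hand terms also vanish, and the minimal ones contribute $\alpha_\ell\, r_{(n,2b_1+1)}\cdots r_{(n,2b_{i'}+1)}$ --- products of \emph{at most} $i$ first-order multipliers, precisely the objects Lemma~\ref{lin_independence} declares independent. The relation $0=\sum\alpha_\ell\prod r$ then kills the minimal coefficients and gives the contradiction. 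The lesson is that the hypothesis ``long'' (more than $i$ elder factors) is designed to make the left-hand side vanish at low exponents, not to be matched against a nonzero quantity at exponent $i+1$; your route discards that leverage and replaces it with an uncontrolled coefficient-matching problem.
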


Note that in fact all variables $b$ and $c$ should have double
subscripts, because they also depends upon $\ell$, but we've omitted
this to avoid index pandemonium.

\begin{proof} Assume the contrary. We are going to prove equality
(*) can not hold whence left and right part have different
multiplies at $p_1^{0}p_3^{0}\dots p_{2j-1}^0p_{2j+1}^{k}$ for some
$k\leqslant i$.
%

Consider left-hand side. All $p_{a_1+1},\dots, p_{2a_{i+1}+1}$ have
odd degree, so have zero multiplier at $(0,\dots,0)$, so
$p_{2a_1+1}\cdots p_{2a_{i+1}+1}$ have zero coefficients at
multiplier at $(0,\dots,0,k)$ for all $k<i+1$.

Consider right-hand side. Call a proper product $g_\ell$ {\em dull}
if at least one of exponents $c_1,\dots,c_{2j-1}$ is positive.
Calculating multiplier at coefficients at $(0,\dots,0,k)$ one may
ignore dull summands $g_\ell$ since their considered multiplier is
zero. Among all non-dull $g_\ell$ with non-zero $\alpha_\ell$ pick
one with minimal value of $c_{2j+1}+i'$. From now on fix
$k=c_{2j+1}+i'$ where $c_{2j+1},i'$ are from the picked $g_\ell$.
Note that $k\leqslant i$ since assumption claims the existence of
non-dull $g\ell$ with $c_{2j+1}=0$, and its $i'\leqslant i$. Call a
non-dull proper product {\em minimal} if $k=c_{2j+1}+i'$ for this
product. Similarly, all non-minimal $g_\ell$ have zero multiplier at
$(0,\dots,0,k)$. A minimal $g_\ell$ is uniquely determined by its
collection $b_1,\dots,b_{i'}$, and have the multiplier $\alpha_\ell
r(n,b_1)\cdots r(n,b_{i'})$ at $(0,\dots,0,k)$. By Lemma
\ref{lin_independence} such multiplier are linearly independent,
thus total sum equals zero implies all $\alpha_\ell=0$, which is the
contradiction with the definition of minimal $g_\ell$.
\end{proof}

\begin{proposition}\label{nonzero_in_delta}
If Theorem \ref{main} holds for $n-2$ variables, then for $n$
variables $\delta$ is representable via proper products. Moreover,
the representation is
$$
\delta=\alpha \big(p_{2j+1}^i-PreIm(p_{2j+1}^i)\big)
$$
for some $\alpha\in F$, consequently $\delta$ have zero multiplier
at $(0,\dots,0,k)$ if and only if $k=i$.
\end{proposition}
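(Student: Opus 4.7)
The plan is to produce the formula directly and then read off the multiplier statement from Lemma \ref{positive_defree_trick}. First I would check that $p_{2j+1}^{i}\in T_{n}$: under $x_{1}=t,\,x_{2}=-t$ the contributions $t^{2j+1}+(-t)^{2j+1}$ cancel in $p_{2j+1}$, so $p_{2j+1}$ is independent of $t$, hence so is $p_{2j+1}^{i}$. Consequently $\pi(p_{2j+1}^{i})=\overline{p_{2j+1}}^{i}\in T_{n-2}$, and by the inductive hypothesis (Theorem \ref{main} in $n-2$ variables) it admits a unique expansion as a linear combination of proper products in $n-2$ variables, which defines $PreIm(p_{2j+1}^{i})$. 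Setting $f:=p_{2j+1}^{i}-PreIm(p_{2j+1}^{i})\in T_{n}$, Lemma \ref{div_delta} together with Remark \ref{divizibility} gives $f=\delta\cdot h$ for some $h\in S_{n}$; a direct degree check shows $\deg f=(2j+1)i=\binom{n}{2}=\deg\delta$ in both parities of $n$, so $h$ must be a scalar.

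The heart of the argument is showing $h\neq 0$. Note that $\overline{p_{2j+1}}^{i}$ is long in $n-2$ variables, since it has $i$ elder factors while $i(n-2)=i-1$. Hence Lemma \ref{positive_defree_trick} applied in $n-2$ variables forces every summand of its proper-product expansion to have $c_{l}>0$ for some $l\in\{1,3,\dots,2j(n-2)+1\}=\{1,3,\dots,2j-1\}$. In particular the monomial $\overline{p_{2j+1}}^{i}$ itself is absent from the expansion. Lifting, $PreIm(p_{2j+1}^{i})$ is a linear combination of proper products in $n$ variables, each carrying at most $i-1\leq i(n)$ elder factors in the $n$-sense and none equal to $p_{2j+1}^{i}$. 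The Corollary to Lemma \ref{lin_independence} then forces $f\neq 0$, so $h\in F^{\times}$, and I set $\alpha:=1/h$.

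For the multiplier claim, $Repr(p_{2j+1}^{i})=x_{2j+1}^{i}$ has multiplier $1$ at $(0,\dots,0,i)$ and $0$ elsewhere. For each summand of $PreIm(p_{2j+1}^{i})$, the positive-exponent factor $p_{l}$ with $l\in\{1,3,\dots,2j-1\}$ forces its $Repr$ to contain $x_{l}$ with positive exponent, so the coefficient at $x_{1}^{0}x_{3}^{0}\cdots x_{2j-1}^{0}x_{2j+1}^{k}$ vanishes for every $k$. Therefore $PreIm(p_{2j+1}^{i})$ contributes zero to the multiplier at every $(0,\dots,0,k)$, and $\delta$'s multiplier at $(0,\dots,0,k)$ equals $\alpha$ when $k=i$ and $0$ otherwise. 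The main obstacle is the nonvanishing step: it hinges on the precise conclusion of Lemma \ref{positive_defree_trick} applied in $n-2$ variables that the positive $c_{l}$ can be taken with $l\leq 2j-1$, which simultaneously excludes $p_{2j+1}^{i}$ from the $PreIm$ expansion and forces every multiplier at $(0,\dots,0,k)$ of $PreIm(p_{2j+1}^{i})$ to vanish.
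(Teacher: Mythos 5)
Your proposal is correct and follows essentially the same route as the paper: apply Lemma \ref{positive_defree_trick} to the long product $\overline{p_{2j+1}}^{i}$ in $n-2$ variables to force a positive exponent on some $p_{l}$ with $l\leqslant 2j-1$ in every summand of $PreIm(p_{2j+1}^{i})$, then use Lemma \ref{div_delta} and the degree count $(2j+1)i=\binom{n}{2}$ to write the difference as a scalar multiple of $\delta$. The only cosmetic difference is that you certify the scalar is nonzero via the Corollary on linear independence of proper products, whereas the paper reads it off from the nonzero multiplier at $(0,\dots,0,i)$ --- a computation you carry out anyway for the final claim.
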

\begin{proof}
Consider $\pi (p_{2j+1}^i)=\overline{p_{2j+1}}^i$. If Theorem
\ref{main} holds for $n-2$ variables, there is a representation
though proper products (proper for $n-2$ variables)
$\overline{p_{2j+1}}^i=\alpha_1\overline{g_1}+\cdots+\alpha_{m}\overline{g_m}$.
Then by Lemma \ref{positive_defree_trick} each of $g$-s is divisible
by one of $\overline{p_{1}},\dots,\overline{p_{2j-1}}$. Use this
representation to build $PreIm(p_{2j+1}^i)$. So, in
$PreIm(p_{2j+1}^i)$ each summand is divisible by one of
$p_{1},\dots,p_{2j-1}$. Thus, $Repr(p_{2j+1}^i-PreIm(p_{2j+1}^i))$
have just one non-zero coefficient at monomials of the form
$x_1^{0}x_3^{0}\dots x_{2j-1}^0x_{2j+1}^{k}$ -- the one for $k=i$.
Also, by Lemma \ref{div_delta} $p_{2j+1}^i-PreIm(p_{2j+1}^i)$ is
divisible by $\delta$. Since they have equal degree,
$p_{2j+1}^i-PreIm(p_{2j+1}^i)=\beta\delta$ for $\beta\in F$,
$\beta=0$ is impossible since $p_{2j+1}^i-PreIm(p_{2j+1}^i)$ have
non-zero multiplier at $(0,\dots,0,i)$.
\end{proof}

Now we are done with preparations. 

\subsection*{Proof of Theorem \ref{main}}
We will need one more notation, used locally only in this
subsection.
Denote $L_{even}^d=S_n^d\cap \{f(p_2,p_4\dots,p_{2i})|\;f\in
F(x_1,x_2,\dots,x_{i})\}$ and $L_{odd}^d=S_n^d\cap \langle p_1S_n,
p_3S_n,\dots,p_{2j+1}S_n\rangle$ (here the $\langle \rangle$ denote
the linear hull). Obviously $S_n^d=L_{even}^d\oplus L_{odd}^d$, thus
$\delta S_n^d=\delta L_{even}^d\oplus \delta L_{odd}^d$.

\begin{remark}\label{obvious3} The polynomial $\delta f$ for $f\in L_{odd}^d$ have zero multiplier at $(0,\dots,0,i)$. Indeed, by Proposition
\ref{nonzero_in_delta}, $\delta$ have all zero coefficients at
$(0,\dots,0,i')$ for $i'<i$, and $f$ have zero multiplier at
$(0,\dots,0)$.
\end{remark}

Fix positive integers $n$ and $d$. Assume the statement of Theorem
\ref{main} holds true for the number of variables $n-2$, and also
holds for $T_n^{(d-1)-}$ (that is, every polynomial in
$T_n^{(d-1)-}$ can be represented as a linear combination of proper
products). We'll show that the statement holds for $T_n^d$.

We need to prove that arbitrary $f \in T_n$ is representable via
proper products. Note that $\pi(f)$ is representable via proper
products of $\overline{p_1},\overline{p_3},\dots$
in virtue of Theorem
\ref{main} for $n-2$ variables. So, one does not need to specially
check conditions ``if can be represented'' in Lemmas
 \ref{div_delta}, \ref{positive_defree_trick}, and
Proposition \ref{nonzero_in_delta}.  Also $PreIm(f)$ is
representable via proper products. Thus it is sufficient to prove
that all polynomials of the form $f-PreIm(f)$ are representable,
applying Lemma \ref{div_delta} one needs that all elements of
$\delta S_n^{d-\binom{n}{2}}$ are representable. Note that all
elements of $\delta L_{odd}^{d-\binom{n}{2}}$  are representable.
Indeed, for a general element
$\delta(p_1s_1+\dots+p_{2j+1}s_{j+1})\in\delta
L_{odd}^{d-\binom{n}{2}}$ apply assumption of the induction to each
term $p_{2\ell+1}\delta s_{\ell+1}$. Since degree of $p_{2\ell+1}$
is greater then zero, $\delta s_{\ell+1}\in T_n^{(d-1)-}$, thus
representable. So, it is sufficient to prove that all cosets of
$\delta S_n^{d-\binom{n}{2}}/\delta L_{odd}^{d-\binom{n}{2}} \cong
\delta L_{even}^{d-\binom{n}{2}}$ are representable.

For this, consider all degree $d$ products of exactly $i$ power sums
$g_\ell=p_{2m_1+1}\cdots p_{2m_i+1}$ where $m_1,\dots,m_i\geqslant
j$ (that is, power sums are the last non-elder or elder). They are
representable by definition, also by Lemma \ref{lin_independence}
their multipliers at  $(0,\dots,0,i)$ are linearly independent, thus
$q_\ell$ themselves are linearly independent. Applying Lemma
\ref{positive_defree_trick} for $n-2$ variables to $\pi(g_\ell)$ get
that $PreIm(g_\ell)$ have zero multiplier at $(0,\dots,0,i)$
(indeed, at all $(0,\dots,0,k)$ for integer $k$ multiplier equals
zero). Thus polynomials $g_\ell-PreIm(g_\ell)$ are linearly
independent. As above, $g_\ell-PreIm(g_\ell) \in \delta S_n^d$.
Next, due to Remark \ref{obvious3} for any  $f \in\delta L_{odd}^d$,
polynomial $f$ have zero multiplier at $(0,\dots,0,i)$, thus
polynomials $g_\ell-PreIm(g_\ell)$ are linearly independent modulo
$\delta L_{odd}^{d-\binom{n}{2}}$, that is represent linearly
independent elements of $\delta L_{even}^{d-\binom{n}{2}}$. To
finish the prove one needs to show, that the amount of different
$g_\ell$ equals to $dim(\delta L_{even}^{d-\binom{n}{2}})$.

The amount of different $g_\ell$ equals $|\{(m_1,\dots,m_i):
j\leqslant m_1\leqslant\cdots\leqslant m_i, \;
(2m_1+1)+\cdots+(2m_i+1)=d\}$. Put $u_1=m_1-j,\dots u_i=m_i-j$ and
rewrite as $|\{(u_1,\dots,u_i): 0\leqslant
u_1\leqslant\cdots\leqslant u_i, \;
u_1+\cdots+u_i=\frac{d-i(2j+1)}{2}\}|$. Note that
$i(2j+1)=\binom{n}{2}$.

The dimension of linear space $L_{even}^{d-\binom{n}{2}}$ is the
cardinality of the set $\{(v_1,\dots,v_i):
deg(p_2^{v_1}p_4^{v_2}\cdots p_{2i}^{v_i})=d-\binom{n}{2}\}$. Covert
to $|\{(v_1,\dots,v_i):
v_1+2v_2+\cdots+iv_i=\frac{d-\binom{n}{2}}{2}\}|$. So we need
\begin{align*}
|\{(u_1,\dots,u_i): 0\leqslant u_1\leqslant\cdots\leqslant u_i, \;
u_1+\cdots+u_i=\frac{d-\binom{n}{2}}{2}\}|\\=|\{(v_1,\dots,v_i):
v_1+2v_2+\cdots+iv_i=\frac{d-\binom{n}{2}}{2}\}|.
\end{align*}
Equality holds because both parts counts the amount of Young diagram
of area $\frac{d-\binom{n}{2}}{2}$ with at most $i$ colons.

\bigskip

\address{G.~Chelnokov \\
 National Research University Higher School of Economics, Moscow, Russia
 \\}
 {\small\tt grishabenruven@yandex.ru }


\address{M.~Turevskii \\
 Federal State Budgetary Educational Institution of Higher Professional Education "Saint-Petersburg State University", Saint-Petersburg, Russia\\}
{ turmax20052005@gmail.com }

\end{document}